\newcommand{\ra}{\rightarrow}
\newcommand{\Ra}{\Rightarrow}
\newtheorem{asm}{Assumption}
\newtheorem{prp}{Proposition}
\newtheorem{rem}{Remark}
\newcommand{\ie}{\unskip, i.\,e.,\xspace}
\newcommand{\eg}{\unskip, e.\,g.,\xspace}
\newcommand{\N}{\ensuremath{\mathbb{N}}}
\newcommand{\R}{\ensuremath{\mathbb{R}}}
\newcommand{\X}{\ensuremath{\mathbb{X}}}
\newcommand{\U}{\ensuremath{\mathbb{U}}}
\let\emptyset\varnothing
\newcommand{\eps}{\ensuremath{\varepsilon}}
\DeclareMathOperator*{\argmin}{arg\,min}
\DeclareMathOperator*{\arginf}{arg\,inf}
\newcommand{\spc}{\ensuremath{\,\,}}	
\newcommand{\subalign}[1]{%
	\vcenter{%
		\Let@ \restore@math@cr \default@tag
		\baselineskip\fontdimen10 \scriptfont\tw@
		\advance\baselineskip\fontdimen12 \scriptfont\tw@
		\lineskip\thr@@\fontdimen8 \scriptfont\thr@@
		\lineskiplimit\lineskip
		\ialign{\hfil$\m@th\scriptstyle##$&$\m@th\scriptstyle{}##$\crcr
			#1\crcr
		}%
	}
}
\newcommand{\Qfnc}{\ensuremath{\mathcal{Q}}}
\title{\LARGE \bf
Model predictive control with stage cost shaping inspired by reinforcement learning
}
\author{Lukas Beckenbach, Pavel Osinenko and Stefan Streif
\thanks{The authors are with the Automatic Control and System Dynamics Laboratory,
        Technische Universit{\"a}t Chemnitz, 09107 Chemnitz, Germany,
        {\tt\small $\{$lukas.beckenbach,pavel.osinenko,} {\tt\small stefan.streif$\}$@etit.tu-chemnitz.de}%
}}
\begin{document}

\maketitle
\thispagestyle{empty}
\pagestyle{empty}

\begin{abstract}

This work presents a suboptimality study of a particular model predictive control with stage cost shaping based on the ideas of reinforcement learning.
The focus of the study is to derive quantities relating the infinite-horizon cost under the said variant of model predictive control to the respective infinite-horizon value function.
The basis control scheme involves usual stabilizing constraints comprising of a terminal set and a terminal cost in the form of a local Lyapunov function.
The stage cost is adapted using the principles of Q-learning, a particular approach to reinforcement learning.
The work is concluded by case studies with two systems for wide ranges of initial conditions.

\end{abstract}


\section{Introduction}
\label{sec:Introduction}

Briefly, MPC converts an infinite-horizon potential control problem to tractable, finite-horizon, online optimizations. 
Since several control setups are available for the corresponding finite-horizon optimization, different performances with respect to the original infinite-horizon problem can be expected. 
In particular, unconstrained MPC in the sense of minimizing solely a truncation of the infinite-horizon cost each time step without any stabilizing so-called terminal constraints was studied rigorously \cite{Jadbabaie2001a,Gruene2009,Reble2011}. 
Though the question of suboptimality under stabilizing terminal constraint was addressed in various setups \cite{Tuna2006,Gruene2008}, they require preliminary assumptions on the LF and/or the terminal set, of which some are reviewed and relaxed herein. 

\textbf{Problem formulation.} Consider in the following the discrete-time system
\vspace{-.5em}
\begin{align} \label{eq:sys}
x_{k+1}=f(x_k,u_k), \quad k=0,1,\dotsc, \; x_0 \in \X,
\vspace*{-.5em}
\end{align}
with state $x_k \in \X \subset \R^{n_x}$ and control $u_k \in \U \subset \R^{n_u}$. 
The map $f: \X \times \U \rightarrow \X$ is continuous in both arguments and $(\bar{x},\bar{u})=(0,0)$ is the unique equilibrium point \ie $f(x,u) = x$ if and only if $x = \bar{x}$ and $u=\bar{u}$.

In this work, deterministic and fully available dynamics are considered, whereas there exist approaches of adapting the model \eg by neural networks \cite{Liu2012}. 

With \eqref{eq:sys}, the infinite-horizon optimal control problem (IHOCP) amounts to solving
\vspace{-.5em}
\begin{align} \label{eq:cost-opt-hor-inf}
\begin{split}
\forall x_0 \in \X: \; V^{\infty}(x_0):= \; \min_{\{u_k\}_{k=0}^{\infty}} \; &\sum_{k=0}^{\infty} l(x_k,u_k), \; \text{s.t.} \; \eqref{eq:sys} 
\end{split}
\end{align}
where the stage cost (or state-input penalty) has the form $l: \X \times \U \rightarrow \R_{\geq 0} $, with $l(\bar{x},\bar{u}) = 0$, $l(x,u)>0$ for $(x,u) \neq (\bar{x},\bar{u})$ and $l(x,\bar{u}) \leq l(x,u)$ for any $x$ and $u$. 
Denote $\U_{\text{ad}}$ the admissible set, containing control sequences $\{u_k\}_{k=0}^{\infty}$, with $u_k \in \U$ and $u_k \ra \bar{u}$ for $k \ra \infty$, for which $f(x_k,u_k) \in \X$, $f(x_k,u_k) \ra \bar{x}$ for $k \ra \infty$ and $\sum_{k=0}^{\infty} l(x_k,u_k) < \infty$. 
It is assumed that $\U_{\text{ad}} \neq \emptyset$. 
The minimizer $\{u_k^{\infty}\}_{k=0}^{\infty} \in \U_{\text{ad}} $ associated with \eqref{eq:cost-opt-hor-inf} may be given in a closed form as a feedback \ie there exists a function $\mu^{\infty}: \X \rightarrow \U$ such that $u_k^{\infty}=\mu^{\infty}(x_k)$. 

Let $F:\X_f \rightarrow \U$ and $\X_f \subset \X$ containing $\{\bar{x}\}$ in the interior. 
A nonlinear MPC (NMPC) optimization problem for time $k$, with terminal cost and terminal set constraint, may be given by
\vspace{-.3em}
\begin{subequations} \label{eq:MPC-1}
	\begin{align}
	\tilde{V}_{\text{MPC}}^{\infty}(x_k)& :=   \nonumber \\
	\min_{\{u_k(i)\}_{i=0}^{N-1}} \; &\sum_{i=0}^{N-1} l(x_k(i),u_k(i)) + F(x_k(N)) \\
	\text{s.t.} \quad &x_k(i+1)=f(x_k(i),u_k(i)), \; x_k(0)=x_k \\
	\quad &x_k(i) \in \X, \; u_k(i) \in \U \label{eq:MPC-1-xu-constr}\\
	\quad &x_k(N) \in \X_f  \label{eq:MPC-1-xstab-constr}.
	\end{align}
\end{subequations}
Constraints in state and input in \eqref{eq:MPC-1-xu-constr} are commonly included whereas \eqref{eq:MPC-1-xstab-constr} is the so-called stabilizing constraint. 
Certain assumptions on the terminal cost $F$ -- namely, $F$ being a local LF with controller $\mu_F$ such that $x \in \X_f \Ra f(x,\mu_F(x)) \in \X_f$ and $\Delta_{\mu_F(x)} F(x)$ having a specific decay rate -- allows relating $\tilde{V}_{\text{MPC}}^{\infty}(x_k)$ to the IHOCP. 

The main drawback of this approach is that the state sequence is forced to converge to $\X_f$ after $N$ steps, which may render the minimizer of \eqref{eq:MPC-1} possibly suboptimal with respect to the first $N$ elements of the infinite sum in \eqref{eq:cost-opt-hor-inf}.

\textbf{Q-learning-inspired stage cost shaping.} The aim of this work is to investigate a particular adaptation scheme of the stage cost in the MPC scheme \eqref{eq:MPC-1} in the form $l \mapsto c \cdot l$. 
This specific approach is motivated by the fact that online adaptation or learning, in general, may use only limited or single (the current) state-input values $(x_k,u_k)$. 
Although ``buffers'', as it was suggested in \cite{Lillicrap2016}, can be utilized, the presented adaptation scheme uses solely the current data. 
Other methods of incorporating learning into MPC include data-based constraint design for robust MPC \cite{Zanon2019} or terminal cost adaptation in MPC through means of iterative learning control \cite{Rosolia2018}. 
The adaptation scheme is based on the principles of Q-learning as follows. 
Interpreting $c \cdot l$ as an approximation to the Q-function
\begin{align} \label{eq:Q-fcn}
\forall (x,u) \in \X \times \U : \; \Qfnc(x,u) = l(x,u) + \min_{a \in \U} \; \Qfnc(f(x,u),a),
\end{align}
the coefficients $c$ should be updated so as to minimize the \emph{temporal difference} \cite{Sutton1988}. 
More details on this approach are given in the subsequent sections. 
The aim of the work is to derive suboptimality bounds of the respective MPC scheme under such an adaptation of the stage cost. 
Particular realizations of the coefficient update rule are discussed in Section \ref{subsec:coeff-interp}. 
The case studies of Section \ref{sec:simstudy} present comparison of the baseline MPC with the suggested stage-cost-shaping MPC for wide ranges of initial conditions.

\textbf{Notation.} The natural numbers including zero are denoted by $\N_{0} := \N \cup \{0\}$. For any scalar functions $W_1(x_k,k):\R^{n_x} \times \N_0 \rightarrow \R_{\geq 0}$ and $W_2(x_k):\R^{n_x} \rightarrow \R_{\geq 0}$, denote $\Delta_u W_1(x_k,k):= W_1(f(x_k,u),k+1)-W_1(x_k,k)$ and $\Delta_u W_2(x_k):= W_2(f(x_k,u))-W_2(x_k)$ the difference of subsequent function values along the state recursion \eqref{eq:sys} under some $u \in \R^{n_u}$.


\section{Suboptimality Description and Algorithm}
\label{sec:Objective_Algorithm}

In the following, some key assumptions on the function $F$ are introduced, one of which can be found frequently in MPC literature \cite{Fontes2001,Jadbabaie2005}. 
Furthermore, optimality and stability related statements are presented, summarizing the objective of this work. 
Specifically, a suboptimality comparison is suggested for the case of the predictive scheme under parametrized decay of a local LF \eqref{eq:MPC-1}.

\subsection{Central Assumptions and Suboptimality Estimate}

Essentially, a key ingredient to ensure stability is the existence of a local controller inside the set, which the terminal predicted state is sought to reach. This means:

\begin{asm} \label{asm:local-ctrl}
	There exists a local controller $\mu_F:\X_f \rightarrow \U$ and a local LF $F:\X_f \rightarrow \R_{\geq 0}$ such that
	\vspace*{-0.5em}
	\begin{align*}
	\forall x_k \in \X_f: \; \Delta_{\mu_F(x_k)} F(x_k) \leq -\alpha_F(x_k).
	\end{align*}
\end{asm}

It is not until later in the analysis that the function $F(x)$ is required to have a decay of magnitude of the stage cost \cite{Chen1998,Hu2002,Jadbabaie2005}:
\begin{asm} \label{asm:local-ctrl-2}
	There exists a local controller $\mu_F: \X_f \rightarrow \U$ and a local LF $F: \X_f \rightarrow \R_{\geq 0}$ such that
	\vspace*{-0.5em}
	\begin{align*}
	\forall x_k \in \X_f: \; \Delta_{\mu_F(x_k)} F(x_k) =  - \bar{\alpha}_k l(x_k,\mu_F(x_k)), 
	\end{align*}
	where $1 \leq \bar{\alpha}_k < \infty$, for all $k \in \N_0$.
\end{asm}
The function $F$ in Assumption \ref{asm:local-ctrl-2} can be regarded as the infinite tail inside $\X_f$ under the stage cost $l$, obtained \eg by local LQ analysis \cite{Chen1998}. 

\begin{prp} \label{prop:cost-rel}
	Let $W: \X \rightarrow \R_{\geq 0}$ be a LF on $\X$ \ie $W(\bar{x})=0$ and positive for any $x \in \X - \{\bar{x}\} $, and $\mu: \X \rightarrow \U$ the control policy, with $\mu(\bar{x}) = \bar{u}$ and $\{\mu(\tilde{x}_k)\}_{k=0}^{\infty} \in \U_{\text{ad}}$, where $\tilde{x}_{k+1}=f(\tilde{x}_k,\mu(\tilde{x}_k))$ under some $\tilde{x}_0 \in \X$.
	Furthermore, let $W$ and $\mu$ be such that
	\begin{align} \label{eq:decay-modified}
	\forall k \in \N_0: \; \Delta_{\mu(\tilde{x}_{k})} W(\tilde{x}_{k}) \leq -c_k l(\tilde{x}_{k},\mu(\tilde{x}_{k})), 
	\end{align}
	for some sequence $\{c_k\}_{k=0}^{\infty}$, with $c_k \in (0,\bar{c})$, $0 < \bar{c}<\infty$. 
	Denote $\delta_0 = W(x_0) - V^{\infty}(x_0)$ for any $x_0 \in \X$ and $(\Delta l)_k := c_k l(\tilde{x}_{k},\mu(\tilde{x}_{k})) - l(\tilde{x}_{k},\mu(\tilde{x}_{k}))$. 
	It holds that
	\begin{align} \label{eq:cost-rel-mod-decay-VN}
	\sum_{k=0}^{\infty} l(\tilde{x}_{k} , \mu(\tilde{x}_{k} )) \leq V^{\infty}(x_0) + \delta_0 - \sum_{k=0}^{\infty} (\Delta l)_k,
	\end{align}
	where furthermore
	\begin{align} \label{eq:sum-Delta-c-bound}
	\sum_{k=0}^{\infty} (\Delta l)_k \leq \delta_0.
	\end{align}
\end{prp}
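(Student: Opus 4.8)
Write $l_k := l(\tilde{x}_k, \mu(\tilde{x}_k))$ for brevity, and identify the trajectory's initial state $\tilde{x}_0$ with the point $x_0$ appearing in $\delta_0$. The plan is to exploit the telescoping structure of the parametrized decay \eqref{eq:decay-modified}. First I would write the difference operator out explicitly as $\Delta_{\mu(\tilde{x}_k)} W(\tilde{x}_k) = W(\tilde{x}_{k+1}) - W(\tilde{x}_k)$, using $\tilde{x}_{k+1} = f(\tilde{x}_k, \mu(\tilde{x}_k))$, so that \eqref{eq:decay-modified} reads $W(\tilde{x}_{k+1}) - W(\tilde{x}_k) \leq -c_k l_k$. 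Summing over $k = 0, \dotsc, M-1$ collapses the left side to $W(\tilde{x}_M) - W(x_0)$, giving $W(\tilde{x}_M) - W(x_0) \leq -\sum_{k=0}^{M-1} c_k l_k$.

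Since $W \geq 0$, the term $W(\tilde{x}_M)$ may simply be dropped to obtain $\sum_{k=0}^{M-1} c_k l_k \leq W(x_0)$ for every $M$; as the summands $c_k l_k$ are nonnegative ($c_k > 0$, $l \geq 0$), the partial sums are monotone and bounded, so the series converges and $\sum_{k=0}^{\infty} c_k l_k \leq W(x_0) < \infty$. Substituting $c_k l_k = l_k + (\Delta l)_k$ from the definition of $(\Delta l)_k$, splitting the sum (legitimate, since $\sum_k l_k$ is finite by admissibility of $\{\mu(\tilde{x}_k)\}$ and hence $\sum_k (\Delta l)_k = \sum_k c_k l_k - \sum_k l_k$ converges as a difference of convergent series), and using $W(x_0) = V^{\infty}(x_0) + \delta_0$ rearranges directly into \eqref{eq:cost-rel-mod-decay-VN}.

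For the bound \eqref{eq:sum-Delta-c-bound}, I would invoke optimality of the value function: because $\{\mu(\tilde{x}_k)\}_{k=0}^{\infty} \in \U_{\text{ad}}$ is an admissible competitor, its infinite-horizon cost cannot undercut the optimum, i.e. $\sum_{k=0}^{\infty} l_k \geq V^{\infty}(x_0)$. Writing the central estimate as $\sum_{k=0}^{\infty} l_k + \sum_{k=0}^{\infty} (\Delta l)_k \leq V^{\infty}(x_0) + \delta_0$ and cancelling the term $V^{\infty}(x_0)$ against this lower bound leaves $\sum_{k=0}^{\infty} (\Delta l)_k \leq \delta_0$.

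The argument is mostly routine telescoping, and the step deserving the most care is the infinite-sum bookkeeping: the monotone-convergence argument for the nonnegative series $\sum_k c_k l_k$, the use of $W \geq 0$ to discard the tail $W(\tilde{x}_M)$, and the fact that $\sum_k (\Delta l)_k$ must be handled as a difference of two convergent series, since $(\Delta l)_k = (c_k - 1) l_k$ need not be sign-definite. Admissibility does double duty here, guaranteeing both the finiteness of $\sum_k l_k$ and the validity of $V^{\infty}(x_0)$ as a lower bound.
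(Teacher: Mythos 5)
Your proof is correct, and it follows the same overall route as the paper: telescope the decay inequality \eqref{eq:decay-modified} along the trajectory, rearrange using $W(x_0)=V^{\infty}(x_0)+\delta_0$, and then invoke optimality of $V^{\infty}$ (the trajectory being an admissible competitor) to peel off \eqref{eq:sum-Delta-c-bound}. Where you differ is in the treatment of the tail and the series bookkeeping, and your version is actually tighter on both counts. The paper disposes of the boundary term by letting $M\to\infty$ and asserting $W(\tilde{x}_{M+1})\to 0$, which tacitly uses continuity of $W$ at $\bar{x}$ (only positivity and $W(\bar{x})=0$ are assumed); you instead discard $W(\tilde{x}_M)\geq 0$ at finite $M$ and pass to the limit by monotone bounded partial sums of $\sum_k c_k l_k$, which needs no such regularity. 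Likewise, the paper justifies convergence of $\sum_k (\Delta l)_k$ by noting $(\Delta l)_k\to 0$ together with $c_k\leq\bar{c}$ --- which, as stated, is not a valid implication (vanishing terms do not give a convergent series) --- whereas you obtain it correctly as the difference of the two convergent series $\sum_k c_k l_k$ and $\sum_k l_k$, the latter finite by admissibility. So: same decomposition and same use of optimality, but your handling of the limits is the more rigorous one and repairs a genuine soft spot in the paper's own write-up.
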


\begin{proof}
	By \eqref{eq:decay-modified}, for any $k,M \in \N_0$, $M \geq k$,
	\begin{align*}
	W(\tilde{x}_{k+M+1}) - W(\tilde{x}_k) \leq \; &- \sum_{i=k}^{M} c_k l(\tilde{x}_i,\mu(\tilde{x}_i)) \\
	= \; & - \sum_{i=k}^{M} l(\tilde{x}_i,\mu(\tilde{x}_i)) - \sum_{i=k}^{M} (\Delta l)_i.
	\end{align*}
	With $\tilde{x}_{k+M+1} \ra \bar{x}$ and thus $W(\tilde{x}_{k+M+1}) \ra 0$ for $M \ra \infty$, and $k=0$,
	\begin{align*}
	\sum_{i=0}^{\infty} l(\tilde{x}_i,\mu(\tilde{x}_i)) &\leq W(\tilde{x}_0) - \sum_{i=0}^{\infty} (\Delta l)_i \\
	\Leftrightarrow \quad \qquad \underbrace{\sum_{i=0}^{\infty} l(\tilde{x}_i,\mu(\tilde{x}_i))}_{< \infty} &\leq V^{\infty}(x_0) + \delta_0 - \sum_{i=0}^{\infty} (\Delta l)_i,
	\end{align*}
	using $\delta_0$ as defined. 
	By optimality, $V^{\infty}(x_0)$ is smaller or equal the value on the left-hand side in the above inequality. 
	Therefore, the sum over $(\Delta l)_i$ is bounded by $\delta_0$ as
	\vspace*{-0.5em}
	\begin{align*}
	0 \leq \sum_{i=0}^{\infty} l(\tilde{x}_i,\mu(\tilde{x}_i)) - V^{\infty}(x_0) & \leq  \delta_0 - \sum_{i=0}^{\infty} (\Delta l)_i \\
	\Ra \hspace{3cm} \sum_{i=0}^{\infty} (\Delta l)_i & \leq \delta_0  . 
	\end{align*}
	Since $\tilde{x}_i \ra \bar{x}$, $l(\tilde{x}_{i},\mu(\tilde{x}_{i})) \ra 0$ for $i \rightarrow \infty$, and thus $(\Delta l)_i \ra 0$. 
	With the boundedness of $c_i \leq \bar{c}$, the limit of the sequence of partial sums of the left-hand side in the above inequality is a real number and the infinite series is convergent and finite. 
\end{proof}

If $W(x) \equiv V^{\infty}(x)$, \eqref{eq:decay-modified} changes to an equation with $c_k=1$, for all $k \in \N_0$, and thus $(\Delta l)_k = \delta_0 = 0$. 
Assuming that (in general) $\delta_0 \geq 0$, \eqref{eq:sum-Delta-c-bound} motivates to search for such functions that yield \eqref{eq:decay-modified} with $c_k \geq 1$ values. 	
Roughly speaking, the above states that the initial discrepancy $\delta_0>0$ may be partially compensated for by increasing the decay. 

\subsection{Setup of the Receding Horizon Scheme}

For a given sequence $c_k(i) \in \R_{>0}$, $i=0,\dotsc,N-1$, define the following value function at time step $k$ for a state $x_k^\ast \in \X$
\begin{subequations} \label{eq:MPC-2}
	\begin{align}
	\tilde{V}^{\infty}(x_k^\ast,k)& :=   \nonumber \\
	\min_{\{u_k(i)\}_{i=0}^{N-1}} \; &\sum_{i=0}^{N-1} c_{k}(i)l(x_k(i),u_k(i)) + F(x_k(N)) \\
	\text{s.t.} \quad &x_k(i+1)=f(x_k(i),u_k(i)), \; x_k(0)=x_k \\
	\quad &x_k(i) \in \X, \; u_k(i) \in \U \label{eq:MPC-2-xu-constr}\\
	\quad &x_k(N) \in \X_f \subset \X \label{eq:MPC-2-xstab-constr}.
	\end{align}
\end{subequations}
The state sequence associated with the minimizing control sequence $\{u_k^\ast (0), \dotsc,u_k^\ast (N-1)\}$ to \eqref{eq:MPC-2} is denoted by $\{x_k^\ast (1), \dotsc,x_k^\ast (N)\}$, whereas $x_{k+1}^\ast = x_{k+1} = f(x_k,u_k^\ast(0))$ is the state along \eqref{eq:sys} under the first element of the control sequence, with $x_{k}^\ast = x_k = x_k^\ast(0)$. 
It is assumed in the following that such a minimizing sequence to \eqref{eq:MPC-2} exists at time $k=0$. 
Due to the time-invariance of the dynamics \eqref{eq:sys} as well as the constraints \eqref{eq:MPC-2-xu-constr} and \eqref{eq:MPC-2-xstab-constr}, the control action is recursively feasible for subsequent times \cite{Mayne2000}.

It can be shown that a decay of the form
\vspace*{-0.4em}
\begin{align} \label{eq:MPC-2-cost-decay-c}
\forall k \in \N_0: \; \Delta_{u_k^\ast(0)} \tilde{V}^{\infty}(x_k^{\ast},k) \leq - c_k(0)l(x_k^{\ast},u_k^\ast(0)),
\end{align}
can be obtained through one inequality constraint over all coefficients in \eqref{eq:MPC-2}. 
That is, the coefficients must lie in the set $\mathcal{W}_{k-1} \ni c_k(i)$, $i=0,\dotsc,N-1$, defined by
\begin{align} \label{eq:MPC-2-cost-decay-c-constr}
\begin{split}
&\mathcal{W}_{k-1} :=\{ \;  C(i) \in \R_{>0}, i=0,\dotsc,N-1:\; \\
& \sum_{i=1}^{N-1} \left( C(i-1) - c_{k-1}(i) \right) l(x_{k-1}^\ast(i),u_{k-1}^\ast(i)) \\
&\; + F(f(x_{k-1}^\ast(N),\mu_F(x_{k-1}^\ast(N)))) - F(x_{k-1}^\ast(N))  \\
&\; + C(N-1)l(x_{k-1}^\ast(N),\mu_F(x_{k-1}^\ast(N))) \; \leq \; 0 \; \, \},
\end{split}
\end{align}
which contains the coefficients, controls and states to \eqref{eq:MPC-2} of preceding time step $k-1$ (associated with $\tilde{V}^{\infty}(x_{k-1}^{\ast},k-1)$, and thus denoted with the suffix $k-1$). 

For $k=0$, certain initial coefficients $c_0(i)\in \R_{>0}$ are used in $\tilde{V}^{\infty}(x_0,0)$, which may be determined by some offline optimization or chosen ``suitably'' (see Section \ref{subsec:init} for a discussion). 

\begin{prp}
	Let Assumption \ref{asm:local-ctrl} hold. 
	Let $c_0(i) \in \R_{>0}$, $i=0,\dotsc,N-1$.  
	If for all $k \in \N$ and all $i \in \{0,\dotsc,N-1\}$, $c_k(i) \in \mathcal{W}_{k-1}$, then \eqref{eq:MPC-2-cost-decay-c} holds for all $k \in \N_0$.
\end{prp}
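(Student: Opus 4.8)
The plan is to run the standard receding-horizon monotonicity argument: at time $k+1$ I will exhibit an explicit feasible (generally suboptimal) control sequence for \eqref{eq:MPC-2}, use it to upper-bound $\tilde{V}^{\infty}(x_{k+1}^\ast,k+1)$, and then show that the difference $\tilde{V}^{\infty}(x_{k+1}^\ast,k+1)-\tilde{V}^{\infty}(x_k^\ast,k)$ reduces to $-c_k(0)l(x_k^\ast,u_k^\ast(0))$ plus a surplus that is \emph{exactly} the defining inequality of $\mathcal{W}_k$. Since the decay \eqref{eq:MPC-2-cost-decay-c} at index $k$ compares the value at $k$ with that at $k+1$, and the hypothesis, reindexed, reads $c_{k+1}(i)\in\mathcal{W}_k$ for every $k\in\N_0$, this closes the claim uniformly in $k$.

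First I would fix $k\in\N_0$, take the optimal sequence $\{u_k^\ast(i)\}_{i=0}^{N-1}$ with trajectory $\{x_k^\ast(i)\}_{i=0}^{N}$, and recall $x_{k+1}^\ast=x_k^\ast(1)$. As the candidate at time $k+1$ I use the shifted tail appended with the local controller, $\hat{u}(i):=u_k^\ast(i+1)$ for $i=0,\dotsc,N-2$ and $\hat{u}(N-1):=\mu_F(x_k^\ast(N))$, whose trajectory is $\hat{x}(i)=x_k^\ast(i+1)$ for $i\leq N-1$ and $\hat{x}(N)=f(x_k^\ast(N),\mu_F(x_k^\ast(N)))$. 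Feasibility is where Assumption \ref{asm:local-ctrl} enters: the shifted portion inherits \eqref{eq:MPC-2-xu-constr} from the optimum at time $k$, while $x_k^\ast(N)\in\X_f$ together with the invariance $x\in\X_f\Rightarrow f(x,\mu_F(x))\in\X_f$ implicit in the assumption gives $\hat{u}(N-1)\in\U$ and $\hat{x}(N)\in\X_f$, so that $F(\hat{x}(N))$ is well defined and the terminal constraint \eqref{eq:MPC-2-xstab-constr} holds. This simultaneously establishes recursive feasibility, hence existence of the minimizers at every step.

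Next I would bound $\tilde{V}^{\infty}(x_{k+1}^\ast,k+1)$ above by the cost of $\hat{u}$, subtract $\tilde{V}^{\infty}(x_k^\ast,k)=\sum_{i=0}^{N-1}c_k(i)l(x_k^\ast(i),u_k^\ast(i))+F(x_k^\ast(N))$, and reindex the shifted stage-cost sum via $j=i+1$. Peeling off the $i=0$ term of the time-$k$ sum produces the target $-c_k(0)l(x_k^\ast,u_k^\ast(0))$, and the leftover regroups into $\sum_{i=1}^{N-1}(c_{k+1}(i-1)-c_k(i))l(x_k^\ast(i),u_k^\ast(i))+c_{k+1}(N-1)l(x_k^\ast(N),\mu_F(x_k^\ast(N)))+F(f(x_k^\ast(N),\mu_F(x_k^\ast(N))))-F(x_k^\ast(N))$. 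This is verbatim the left-hand side of \eqref{eq:MPC-2-cost-decay-c-constr} with $C(\cdot)=c_{k+1}(\cdot)$ evaluated on the data of time $k$, so membership $c_{k+1}(i)\in\mathcal{W}_k$ forces the surplus to be $\leq 0$ and yields \eqref{eq:MPC-2-cost-decay-c}.

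The routine parts are the reindexing bookkeeping and verifying each constraint on the candidate. The one point I expect to need care is the precise matching of the collected surplus against the form of $\mathcal{W}_k$ — in particular the index shift between the new coefficients $c_{k+1}(i-1)$ and the old $c_k(i)$, and the appearance of the propagated terminal term $F(f(x_k^\ast(N),\mu_F(x_k^\ast(N))))-F(x_k^\ast(N))$. It is worth noting that only the existence of $\mu_F$ and the invariance of $\X_f$ from Assumption \ref{asm:local-ctrl} are load-bearing here; its decay bound $-\alpha_F$ is absorbed into, rather than used to satisfy, the constraint \eqref{eq:MPC-2-cost-decay-c-constr}.
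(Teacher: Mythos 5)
Your proposal is correct and follows essentially the same route as the paper's proof: both use the shifted optimal tail appended with $\mu_F(x_k^\ast(N))$ as a feasible candidate at time $k+1$, invoke optimality of $\tilde{V}^{\infty}(x_{k+1}^\ast,k+1)$, and identify the leftover terms as exactly the defining inequality of $\mathcal{W}_k$; the paper merely runs the manipulation in the opposite direction (adding terms to the constraint inequality and then applying optimality, rather than starting from the optimality bound and regrouping). Your explicit treatment of feasibility via the invariance of $\X_f$ is slightly more careful than the paper's, which simply declares the candidate sequences feasible.
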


\begin{proof}
	Consider the inequality \eqref{eq:MPC-2-cost-decay-c-constr} in $\mathcal{W}_{k-1}$ at time $k-1 \mapsto k$. 
	Adding $\sum_{i=1}^{N-1} c_{k}(i)l(x_k^\ast(i),u_k^\ast(i))$ and $F(x_k^\ast(N))$ to both sides reads 
	\begin{align*}
	&\sum_{i=1}^{N-1} c_{k+1}(i-1) l(x_k^\ast(i),u_k^\ast(i)) \\
	&+ \, c_{k+1}(N-1)l(x_{k}^\ast(N),\mu_F(x_{k}^\ast(N))) \\
	&+ \, F(f(x_{k}^\ast(N),\mu_F(x_{k}^\ast(N)))) \\
	\leq & \sum_{i=1}^{N-1} c_{k}(i) l(x_k^\ast(i),u_k^\ast(i)) + F(x_{k}^\ast(N)) \\
	= \, & \tilde{V}^{\infty}(x_k^{\ast},k) - c_{k}(0)l(x_{k}^\ast(0),u_{k}^\ast(0)).
	\end{align*}
	Define the feasible sequences $\{\tilde{x}_{k+1}(0),\dotsc,\tilde{x}_{k+1}(N)\}:= \{ \{x_k^\ast (1), \dotsc,x_k^\ast (N) \}, f(x_k^\ast{N},\mu_F(x_k^\ast(N))) \}$ and $\{\tilde{u}_{k+1}(0),\dotsc,\tilde{u}_{k+1}(N-1)\}:= \{ u_k^\ast(1),\dotsc,u_k^\ast(N-1),\mu_F(x_k^\ast(N)) \}$. 
	Substituting this into the left-hand side of the inequality gives
	\begin{align*}
	&\tilde{V}^{\infty}(x_{k+1}^{\ast},k+1) \\
	\leq &\sum_{i=0}^{N-1} c_{k+1}(i) l(\tilde{x}_{k+1}(i),\tilde{u}_{k+1}(i)) + F(\tilde{x}_{k+1}(N))
	\end{align*}
	by optimality of $\tilde{V}^{\infty}(x_{k+1}^{\ast},k+1)$.
\end{proof}
\begin{rem}
	By the intermediate calculation of coefficients satisfying \eqref{eq:MPC-2-cost-decay-c-constr}, at each time $k$ \emph{after} solving \eqref{eq:MPC-2}, stability can be enforced using a local LF with any decay $\alpha_F$. 
	That is, no stage cost related minimum decay rate is required inside the terminal set $\X_f$. 
	This reduces Assumption \ref{asm:local-ctrl-2} to the more general Assumption \ref{asm:local-ctrl}. 
\end{rem}
Recall from \eg \cite{Khalil1996-nonlin-sys}, that for a time-variant LF function $W(x,k)$ with controller $\mu(x)$ there exist $\alpha_1,\alpha_2 \in \mathcal{K}_{\infty}$ and $\alpha_3 \in \mathcal{K}$ such that for all $x \in \X, \spc k \in \N_0$, $\alpha_1(\|x\|) \leq W(x,k) \leq \alpha_2(\|x\|)$ and $\Delta_{\mu(x)} W(x,k) \leq -\alpha_3(\|x\|)$. 
Thus, for stability, the value function in \eqref{eq:MPC-2} must be bounded, which is equivalent to bounded coefficients for non-zero and bounded stage costs.

\subsection{Coefficient Update}
\label{subsec:coeff-interp}

In order to obtain somewhat \emph{meaningful} coefficients $c_k(i)$ and respective update laws, aspects of Q-learning \cite{Watkins1992}, and more specifically the Q-function, can be transferred into the above context. 
That is, for updating the coefficients, considering algorithms from Q-learning may be beneficial since an adaptation to the optimal decay rate is sought that minimizes the difference between the terms in \eqref{eq:sum-Delta-c-bound}. 
The Q-function $\Qfnc: \X \times \U \rightarrow \R_{\geq 0}$, formally defined as in \eqref{eq:Q-fcn}, is commonly replaced by some parametric architecture $Q(x,u):=\langle w^\ast,\varphi(x,u) \rangle$, where $w^\ast \in \R^p$ are the parameters to a regressor $\varphi:\X \times \U \rightarrow \R^p$, comprising some basis functions. Since \cite{Lewis2009}
\vspace*{-0.3em}
\begin{align*}
\forall x \in \X: \;  V^{\infty}(x) = \min_{u \in \U} \; \Qfnc(x,u),
\end{align*}
the function $Q$ seeks to approximate the infinite-horizon optimal value. 

In order to transfer this approach to the (online) predictive scheme, the \emph{stage cost may be used as regressor}. 
The underlying hypothesis in choosing particularly $\varphi(x,u) \equiv l(x,u)$ \ie $Q(x,u) \equiv w^\ast \, l(x,u)$, is that the optimal value function $V^{\infty}$ may be expressed as a certain magnitude, or factor, of the stage cost $l$.

\textbf{Temporal difference update.} For computation of $w^\ast$, certain \emph{offline} iterative routines are available \cite{Bertsekas2012}, approximating $V^{\infty}$ a priori over samples in the state-input space. 
The temporal difference method \cite{Sutton1988} is one particular approach that can be used to compute $w^\ast$ by iterating through 
\begin{align} \label{eq:TD-iter-off}
\begin{split}
w_{j}= &\argmin_{C} \; \big(  l(x,u) - C l(x,u) \\[-0.5em] 
&\hspace{2.6cm}+ \min_{a \in \U} \; w_{j-1} l(f(x,u),a) \big)^2,
\end{split}
\end{align} 
$j=0,1,\dotsc$, until $\|w_{j}-w_{j-1}\| \leq \eps_w$ for all $x,u$ (samples) and some $\eps_w>0$. 
Then, take $w^\ast := w_{j}$. 

\begin{rem}
	The above update rule refers to an off-policy learning as ``hypothetical'' actions $a$ are compared to find the respective minimum \cite{Singh2000}. 
		In the presented case, however, sample control actions are not necessary as the minimum of $l$ over $u$ is at $a = \bar{u}$ for all times. 
\end{rem}

For \emph{online} adaptation, however, the parameters must be updated using only the current state and control obtained through \eqref{eq:MPC-2}. 
This can be achieved using \eqref{eq:TD-iter-off} for $j=k$ with $x=x_k$ and $u=u_k$. 
In this case, observe that the updated parameter correlates to the coefficient used in the subsequent time step in \eqref{eq:MPC-2} \ie $w_{k} = c_{k+1}$, $\forall k \in \N_0$. 

\begin{rem}
	There exists approaches which handle the issue of providing samples online and utilize parameters $w \in \R^p$, $p>1$. 
		In \cite{Lewis2009}, the use of recursive least-squares is suggested, updating the parameters each $p$ time steps, using the past $p$ state-action values. 
		Provided, such ``buffer'' can be used, the parameters can be updated by sampling a minibatch uniformly from that buffer \cite{Lillicrap2016}. 
\end{rem}	

\begin{rem}
	One should be aware of the distinct $TD(\lambda)$ \cite{Sutton1988,Dayan1992} approaches, representing a relevance weighting $0 \leq \lambda \leq 1$ of recent temporal differences to earlier ones \cite{Rummery1994}. 
\end{rem}

In the case $p=1$, the analogue online update rule to \eqref{eq:TD-iter-off} for any of the $N$ coefficients may then read as
\begin{align} \label{eq:TD-iter-on}
\begin{split}
c_{k+1}(i)  = &\arginf_{C \in \R_{>0}} \; \big(  l(x_k^\ast(i),u_k^\ast(i)) - C l(x_k^\ast(i),u_k^\ast(i)) \\[-0.5em] 
& \hspace{1.8cm}+ c_{k}(i) \min_{a \in \U} \; l(f(x_k^\ast(i),u_k^\ast(i)),a) \big)^2, 
\end{split}
\end{align}
$i=0, \dotsc,N-1$. 
It needs to be pointed out that different update rules may be used for the individual $c_k(i)$. 
Note that the coefficient is pulled out of the minimization objective in the last term on the right-hand side in \eqref{eq:TD-iter-on} as it does not change the minimum (which is also true for \eqref{eq:TD-iter-off}). 
The infimum is used since the feasible set is open. 

As the stability constraint \eqref{eq:MPC-2-cost-decay-c-constr} must be satisfied, certain modifications to the respective update rule are necessary. 
For instance, in the optimization \eqref{eq:TD-iter-on}, which for all coefficients $C^{+} = [c_{k+1}(0),\dotsc,c_{k+1}(N-1)]^\top$ reads as
\vspace*{-0.3em}
\begin{align} \label{eq:critic-prob}
C^{+}= \arginf_{[C_0,\dotsc,C_{N-1}]^\top \in \R_{>0}^N} \; \sum_{i=0}^{N-1} (\beta_i - C_i l(x_k^\ast(i),u_k^\ast(i)))^2
\end{align}
with $\beta_i = l(x_k^\ast(i),u_k^\ast(i)) + c_{k}(i)l(f(x_k^\ast(i),u_k^\ast(i)),\bar{u})$, the set \eqref{eq:MPC-2-cost-decay-c-constr} serves as optimization constraint. 

For $k=0$, an initial guess of coefficients $c_{0}(i)$, $i=0,\dotsc,N-1$, is required, which is a crucial step in the sense that it may influence the performance severely. 
Unfortunately, no rule for choosing $c_0$ can be derived to this point (see also Section \ref{subsec:init}). 

Summarizing, the algorithm is depicted in Table \ref{tab:algorithm}. 
\vspace*{-0.5em}
\begin{table}[h]
	\begin{tabular}{p{0.455\textwidth}}
		\hline
		\textbf{Algorithm:}\\
		\hline 
		\begin{enumerate}
			\setlength{\itemsep}{1pt}
			\item[1)] Initialize:  Choose $F$, $\mu$, $\X_f$ and $c_0(i) \in \R_{>0}$, $i=0,\dotsc,N-1$
		\end{enumerate}
		Set $k=0$. Then,
		\begin{enumerate}
			\setlength{\itemsep}{1pt}
			\item[2)] Obtain state $x_k$.
			\item[3)] Solve \eqref{eq:MPC-2} using $c_k(i)$, $i=0,\dotsc,N-1$, to obtain $\{u_k^\ast(i)\}_{i=0}^{N-1}$ and $\{x_k^\ast(i)\}_{i=0}^{N}$
			\item[4)] Solve \eqref{eq:critic-prob} under the constraint set $ \mathcal{W}_k$ to obtain $c_{k+1}(i) \in \mathcal{W}_k$, $i=0,\dotsc,N-1$
			\item[5)] Apply the first element of the sequence $u_k^\ast(0)$ to \eqref{eq:sys}
			\item[6)] $k \mapsto k+1$, go to 2)
		\end{enumerate}\\
		\hline 
	\end{tabular}
	\caption{Procedure of the online scheme.}
	\label{tab:algorithm}
	\vspace*{-2em}
\end{table}

\textbf{Allocation.} As an alternative to an additional online optimization as in \eqref{eq:critic-prob}, the coefficients may be specified as the following: 
Looking at the set $\mathcal{W}_{k}$, one can choose
\begin{align} \label{eq:c-constr-equal-newold}
c_{k+1}(i-1) = c_k(i), \quad i=1,\dotsc,N-1.
\end{align}
Then, to ensure the inequality \eqref{eq:MPC-2-cost-decay-c-constr} in step $k$, 
\begin{align} \label{eq:c-constr-inequal-last}
\begin{split}
&c_{k+1}(N-1) l(x_k^\ast(N),\mu_F(x_k^\ast(N)) \\
\leq\;  &F(x_k^\ast(N)) - F \left( f \left( x_k^\ast(N),\mu_F(x_k^\ast(N)) \right) \right),
\end{split}
\end{align}
which, under Assumption \ref{asm:local-ctrl-2}, may be restated as the specific allocation
\begin{align} 
c_{k+1}(N-1) &= \, \frac{F(x_k^\ast(N)) - F \left( f \left( x_k^\ast(N),\mu_F(x_k^\ast(N)) \right) \right)}{l(x_k^\ast(N),\mu_F(x_k^\ast(N))}  \nonumber \\
&= \,  \bar{\alpha}_k. \label{eq:c-constr-equal-last}
\end{align} 

\begin{prp} \label{prop:c-bound-alloc}
	Let Assumption \ref{asm:local-ctrl-2} hold. 
	If the initial coefficients are $c_0(i) = 1$, for all $i \in \{0,\dotsc,N-1\}$, then the following bound regarding the decay rate \eqref{eq:MPC-2-cost-decay-c} holds:
	\begin{align}
	\forall k \in \N_0: \; 1 \leq c_k(0) \leq \bar{c} = \sup_{k \in \N_0} \; \bar{\alpha}_k < \infty.
	\end{align}
\end{prp}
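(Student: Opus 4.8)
The plan is to observe that, under the allocation, the update of the coefficient vector $(c_k(0),\dots,c_k(N-1))$ is nothing but a shift register. Rewriting \eqref{eq:c-constr-equal-newold} as $c_{k+1}(j)=c_k(j+1)$ for $j=0,\dots,N-2$ and reading \eqref{eq:c-constr-equal-last} as $c_{k+1}(N-1)=\bar{\alpha}_k$, one sees that every step shifts the entries toward lower indices and feeds the freshly computed value $\bar{\alpha}_k$ in at index $N-1$. First I would establish this reformulation explicitly, then unroll the recursion to obtain a closed form for the quantity of interest $c_k(0)$, and finally read the two bounds off Assumption \ref{asm:local-ctrl-2}.

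For the unrolling I would chain the shift, $c_k(0)=c_{k-1}(1)=\dots=c_{k-j}(j)$, which is valid as long as $j\leq N-1$ and $k-j\geq 0$. Taking $j=\min\{k,N-1\}$ splits the analysis into two regimes: for $0\leq k\leq N-1$ the chain reaches the initialization, so $c_k(0)=c_0(k)=1$; for $k\geq N$ it reaches index $N-1$ one step earlier, $c_k(0)=c_{k-(N-1)}(N-1)$, and substituting \eqref{eq:c-constr-equal-last} at time $k-N$ gives $c_k(0)=\bar{\alpha}_{k-N}$. A short induction on $k$ makes this precise; I would check the overlap $k=N-1$, where both descriptions return $1$, to confirm the two regimes agree.

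With the closed form $c_k(0)\in\{1\}\cup\{\bar{\alpha}_m:m\geq 0\}$ in hand, the lower bound is immediate: Assumption \ref{asm:local-ctrl-2} yields $\bar{\alpha}_m\geq 1$ for all $m$, hence $c_k(0)\geq 1$ for every $k$, and also $\bar{c}=\sup_m\bar{\alpha}_m\geq 1$. The upper bound $c_k(0)\leq\sup_m\bar{\alpha}_m=\bar{c}$ is then definitional, once the finiteness of $\bar{c}$ is secured.

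The hard part is exactly this finiteness $\bar{c}=\sup_k\bar{\alpha}_k<\infty$, since Assumption \ref{asm:local-ctrl-2} only guarantees $\bar{\alpha}_k<\infty$ pointwise. Here I would use that each $\bar{\alpha}_k$ is the value of $\bar{\alpha}(x):=(F(x)-F(f(x,\mu_F(x))))/l(x,\mu_F(x))$ at the terminal predicted state $x_k^\ast(N)$, which lies in the terminal set $\X_f$ by \eqref{eq:MPC-2-xstab-constr}. Continuity of $F$, $f$ and $l$ makes $\bar{\alpha}(\cdot)$ continuous on $\X_f\setminus\{\bar{x}\}$, while the local quadratic (LQ) behavior of $F$ and $l$ near the origin --- the very interpretation of $F$ as the infinite tail inside $\X_f$ --- keeps the $0/0$ ratio bounded as $x\ra\bar{x}$. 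On the compact terminal set this yields $\sup_{x\in\X_f}\bar{\alpha}(x)<\infty$, and since $\bar{\alpha}_k=\bar{\alpha}(x_k^\ast(N))$ the supremum over $k$ is finite, completing the bound $1\leq c_k(0)\leq\bar{c}<\infty$.
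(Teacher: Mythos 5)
Your shift-register unrolling is exactly the paper's argument: the paper's entire proof is the single sentence that the result ``follows directly from \eqref{eq:c-constr-equal-newold} and \eqref{eq:c-constr-equal-last}'', and your closed form ($c_k(0)=c_0(k)=1$ for $k\leq N-1$, $c_k(0)=\bar{\alpha}_{k-N}$ for $k\geq N$), combined with $\bar{\alpha}_m\geq 1$ from Assumption \ref{asm:local-ctrl-2}, is precisely what that sentence compresses. Where you diverge is the finiteness of $\bar{c}=\sup_k\bar{\alpha}_k$: the paper does not prove this at all --- it implicitly reads Assumption \ref{asm:local-ctrl-2} as supplying a uniform bound and simply defines $\bar{c}$ as that supremum. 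You are right that pointwise finiteness $\bar{\alpha}_k<\infty$ does not by itself give a finite supremum, so you have spotted a genuine looseness in the paper. However, your repair leans on hypotheses the paper never states: compactness of $\X_f$ (only that it contains $\bar{x}$ in its interior is assumed), continuity of $F$, $l$ and $\mu_F$ (only $f$ is assumed continuous), and boundedness of the $0/0$ ratio as $x\ra\bar{x}$ --- this last condition being exactly the kind of requirement the paper must impose separately (Assumption \ref{asm:bound-seq-limsup}) for the analogous ratio arising in the temporal-difference update, rather than something that follows from the informal ``infinite tail'' interpretation of $F$. So your final paragraph should be presented as a sufficient condition under additional regularity assumptions, not as a consequence of the paper's stated framework; within that framework, the proposition is only as strong as reading $\sup_k\bar{\alpha}_k<\infty$ directly into Assumption \ref{asm:local-ctrl-2}, which is evidently what the authors intend.
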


\begin{proof}
	The result follows directly from \eqref{eq:c-constr-equal-newold} and \eqref{eq:c-constr-equal-last}.
\end{proof}

Observe that for $F$ fulfilling Assumption \ref{asm:local-ctrl}, smaller decay rates can be obtained in \eqref{eq:MPC-2-cost-decay-c} than under Assumption \ref{asm:local-ctrl-2} due to the different local decay rates of $F$. 
It should be noted, however, that the above allocation does not represent any learning strategy but mainly addresses the issue of maintaining stability while providing a specific coefficient upper bound which can be used in Proposition \ref{prop:cost-rel}.

\section{Discussion}
\label{sec:prop-ana}

In the following, the temporal difference update is examined under the assumption that the stability related constraint \eqref{eq:MPC-2-cost-decay-c-constr} is satisfied while furthermore the initialization of the scheme is discussed. 

\subsection{Online Adaptation}
\label{subsec:online-adapt}

To elaborate on the meaning of the update \eqref{eq:TD-iter-on} for the individual coefficients, it is assumed that, under \eqref{eq:TD-iter-on}, the stability constraint is satisfied for all times: 
\begin{asm} \label{asm:TDonline-stab}
	For any $k \in \N_0$, under the sequences $\{x_k^\ast(i)\}_{i=0}^{N}$ and $\{u_k^\ast(i)\}_{i=0}^{N-1}$ from \eqref{eq:MPC-2}, the coefficients $c_{k+1}(i)$, $i=0,\dotsc,N-1$, obtained through \eqref{eq:TD-iter-on} lie in $\mathcal{W}_k$.
\end{asm}

This is purely to demonstrate certain properties of the update and does not affect the functionality of the algorithm. 
As mentioned previously, it is desirable to have $(\Delta l)_k$ in \eqref{eq:cost-rel-mod-decay-VN} as large as possible. 
Observe the following:

\begin{prp} \label{prop:c-online-increase}
	Let Assumption \ref{asm:local-ctrl} and \ref{asm:TDonline-stab} hold. 
	Let $x_k^\ast(i)$ and $u_k^\ast(i)$, $i=0,\dotsc,N-1$ be the solution to \eqref{eq:MPC-2} for any $k \in \N_0$. 
	If, for any $k \in \N_0$, $c_k(i) \in (0, 1]$ and $(x_k^\ast(i),u_k^\ast(i)) \neq (\bar{x},\bar{u})$, then
	\vspace*{-0.4em}
	\begin{align} \label{eq:Delta-c-incr}
	\Delta c_k(i) := c_{k+1}(i) - c_k(i) > 0,
	\end{align}
	under \eqref{eq:TD-iter-on}, for any $i \in \{0,\dotsc,N-1\}$. 
\end{prp}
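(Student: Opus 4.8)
The plan is to exploit that the update \eqref{eq:TD-iter-on} minimizes a perfect square in the scalar $C$, so its minimizer admits a closed form; substituting that form into $\Delta c_k(i) = c_{k+1}(i) - c_k(i)$ then reduces the claim to an elementary sign check under the hypothesis $c_k(i) \in (0,1]$.

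First I would fix $k$ and $i$ and introduce the shorthand $l_i := l(x_k^\ast(i),u_k^\ast(i))$ and $l_i^+ := \min_{a \in \U} l(f(x_k^\ast(i),u_k^\ast(i)),a)$. By the stage-cost property $l(x,\bar{u}) \le l(x,u)$, the inner minimum is attained at $a = \bar{u}$, so $l_i^+ = l(f(x_k^\ast(i),u_k^\ast(i)),\bar{u}) \ge 0$; and since $(x_k^\ast(i),u_k^\ast(i)) \neq (\bar{x},\bar{u})$, the positivity of $l$ off the equilibrium gives $l_i > 0$. The objective in \eqref{eq:TD-iter-on} is then $\big(l_i(1-C) + c_k(i)\,l_i^+\big)^2$, a nonnegative perfect square whose unique zero, hence unique minimizer, is
\begin{align*}
c_{k+1}(i) = 1 + c_k(i)\,\frac{l_i^+}{l_i}.
\end{align*}
Because $c_k(i) > 0$ and $l_i^+ \ge 0$, this value is $\ge 1 > 0$, so it lies in the open feasible set $\R_{>0}$; the infimum in \eqref{eq:TD-iter-on} is therefore attained and equals this expression.

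It remains to evaluate the increment. Subtracting $c_k(i)$ yields
\begin{align*}
\Delta c_k(i) = c_{k+1}(i) - c_k(i) = \big(1 - c_k(i)\big) + c_k(i)\,\frac{l_i^+}{l_i}.
\end{align*}
Under $c_k(i) \in (0,1]$ the first summand is $\ge 0$ and the second is $\ge 0$, so $\Delta c_k(i) \ge 0$; for $c_k(i) < 1$ the first summand is strictly positive and the claim follows immediately. The delicate point, and the part I expect to require the most care, is the boundary case $c_k(i) = 1$: there the first summand vanishes and strict positivity hinges on $l_i^+ > 0$, i.e.\ on the successor state $f(x_k^\ast(i),u_k^\ast(i))$ not being exactly the equilibrium $\bar{x}$. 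I would establish this from the standing properties of $l$ together with the fact that admissible predicted trajectories approach $\bar{x}$ only asymptotically, so that a nonequilibrium pair cannot be driven to the origin in a single step; absent such an argument the conclusion on that boundary would have to be weakened to $\Delta c_k(i) \ge 0$.
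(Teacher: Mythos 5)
Your proposal is correct and takes essentially the same route as the paper: the paper likewise reads off the unique zero of the perfect square in \eqref{eq:TD-iter-on}, obtaining the closed form $c_{k+1}(i) = 1 + c_k(i)\,l(f(x_k^\ast(i),u_k^\ast(i)),\bar{u})/l(x_k^\ast(i),u_k^\ast(i))$, and then concludes $c_{k+1}(i) > 1 \geq c_k(i)$. The only difference is the boundary point you flag at $c_k(i)=1$, and here your caution is warranted: the paper does not resolve it, it glosses over it. The paper asserts that since $(x_k^\ast(i),u_k^\ast(i)) \neq (\bar{x},\bar{u})$ the ratio term is strictly positive, but that hypothesis only guarantees the denominator $l(x_k^\ast(i),u_k^\ast(i))>0$; strict positivity of the numerator requires $f(x_k^\ast(i),u_k^\ast(i)) \neq \bar{x}$, and the standing assumption on the dynamics ($f(x,u)=x$ if and only if $(x,u)=(\bar{x},\bar{u})$) does not exclude a one-step ``deadbeat'' transition of a non-equilibrium pair onto the equilibrium. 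Your proposed repair via asymptotic-only convergence of admissible trajectories does not close this either (finite-time arrival at $\bar{x}$ is compatible with the definition of $\U_{\text{ad}}$), so absent an extra assumption the honest conclusion is exactly your fallback: $\Delta c_k(i) \geq 0$ in general, with strict inequality whenever $c_k(i)<1$ or $f(x_k^\ast(i),u_k^\ast(i))\neq\bar{x}$. In short, your attempt is at least as rigorous as the paper's own proof, and the ``delicate point'' you identified is a genuine implicit assumption in the paper rather than a gap in your argument.
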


\begin{proof}
	If $c_k(i) \in (0,1]$ and $(x_k^\ast(i),u_k^\ast(i)) \neq (\bar{x},\bar{u})$, then \eqref{eq:TD-iter-on} yields
	\begin{align}
	c_{k+1}(i) l(x_k^\ast(i),u_k^\ast(i)) &- l(x_k^\ast(i),u_k^\ast(i)) \nonumber  \\[0.4em] 
	= & c_{k}(i)\min_{a \in \U} \; l(f(x_k^\ast(i),u_k^\ast(i)),a) \nonumber \\
	\Leftrightarrow \qquad c_{k+1}(i) = &1 + c_k(i)\frac{l(f(x_k^\ast(i),u_k^\ast(i)),\bar{u}) }{l(x_k^\ast(i),u_k^\ast(i))}, \label{eq:c-update-TDonline}
	\end{align} 
	where $\min_{a \in \U} \; l(f(x_k^\ast(i),u_k^\ast(i)),a) = l(f(x_k^\ast(i),u_k^\ast(i)),\bar{u})$. Since $(x_k^\ast(i),u_k^\ast(i)) \neq (\bar{x},\bar{u})$, the last term on the right-hand side is strictly positive, which implies that $c_k(i) \in (0,1] \, \Ra \, c_{k+1}(i) >1$, and subsequently \eqref{eq:Delta-c-incr}.
\end{proof}  

It can be deduced that whenever the algorithm is initialized with value $c_0(i) \in (0,1]$, $i=0,\dotsc,N-1$, all subsequent coefficients are $c_k(i) >  1$, $k \in \N$, if state and control satisfy $(x_k^\ast(i),u_k^\ast(i)) \neq (\bar{x},\bar{u})$.

\begin{rem} \label{rem:lower-bound-opt}
	By the above property of the coefficients tending toward higher magnitude and by definition of the Q-function being the infinite sum of stage cost, it is legitimate to directly impose a lower bound constraint $c_k(i) \geq 1$, $i \in \{0,\dotsc,N-1\}$, for any $k \in \N$, on \eqref{eq:critic-prob}. 	
\end{rem}

Observe, when $(x_k^\ast(i),u_k^\ast(i)) = (\bar{x},\bar{u})$, any $c_k(i) \in \R_{>0}$ is a feasible candidate to \eqref{eq:TD-iter-on}.  

Recalling from Proposition \ref{prop:cost-rel} that there exists an upper bound on the (first) parameter $c_k(0) \leq \bar{c}$, the following assumption regarding \eqref{eq:c-update-TDonline} is made: 
\begin{asm} \label{asm:bound-seq-limsup}
	For any sequences $\{x_k\}_{k=0}^{\infty}$ $\{u_k\}_{k=0}^{\infty}$, with $x_k \in \X$, $u_k \in \U$ and $(x_k,u_k) \ra (\bar{x},\bar{u})$ for $k \ra \infty$,
	\begin{align}
	\limsup_{(x_k,u_k) \ra (\bar{x},\bar{u})} \; \frac{l(f(x_k,u_k),0)}{l(x_k,u_k)} < \infty.
	\end{align}
\end{asm}

The above Assumption \ref{asm:bound-seq-limsup} can be verified only using properties of the dynamic $f$ and the stage cost $l$. 

\begin{rem}
	Assumption \ref{asm:bound-seq-limsup} does not influence the functionality of the proposed algorithm but only provides a bound $\bar{c}$ regarding the suboptimality estimate established in Proposition \ref{prop:cost-rel}. 
	Furthermore, referring to \eqref{eq:TD-iter-on} and \eqref{eq:critic-prob} -- for bounded, nonzero $(x_k^\ast(i),u_k^\ast(i))$ and bounded $0 <c_k(i)<\infty$ -- an infimum is attained for bounded $c_{k+1}(i) < \infty$.
\end{rem} 

Hence, different upper bounds $\bar{c}$ are obtained by Proposition \ref{prop:c-bound-alloc} and by the supremum of the right-hand side of Eq. \eqref{eq:c-update-TDonline} according to the value in Assumption \ref{asm:bound-seq-limsup}.
This implies that different performances may be achieved with respect to the estimate \eqref{eq:cost-rel-mod-decay-VN}, specifically regarding the difference between $\delta_0$ and the sum over $(\Delta l)_k$.

\subsection{Initialization}
\label{subsec:init}

Notice, that in \eqref{eq:cost-rel-mod-decay-VN}, $W(x_k)$ relates to $\tilde{V}^{\infty}(x_k^\ast,k)$ whereas the applied control is given by $\mu(x_k) = u_k^\ast(0)$. 
Since, if Assumption \ref{asm:local-ctrl-2} holds, $V^{\infty}(x_0) \leq \tilde{V}_{\text{MPC}}^{\infty}(x_0)$ for any $x_0 \in \X$, inequality \eqref{eq:cost-rel-mod-decay-VN} can be extended to give an indirect comparison with the MPC cost. Denoting $\gamma_0 := \tilde{V}_{\text{MPC}}^{\infty}(x_0) - V^{\infty}(x_0)$, it follows that under Assumption \ref{asm:local-ctrl-2}, $\gamma_0 \geq 0$ and furthermore
\vspace*{-0.5em}
\begin{align} \label{eq:comparison-MPC1-MPC2}
\sum_{k=0}^{\infty} l( x_k^\ast,u_k^\ast(0) ) \leq \tilde{V}_{\text{MPC}}^{\infty}(x_0) \underbrace{ -\gamma_0 + \delta_0 - \sum_{k=0}^{\infty} (\Delta l)_k}_{:= \Delta V_{N}},
\end{align}  
where $x_0^\ast = x_0 \in \X$. 
This implies that if $\gamma_0$ is large enough and the infinite sum over $(\Delta l)_k$ has a sufficiently high (finite) value, $\Delta V_{N}$ can be made non-positive and thus
\vspace*{-0.5em}
\begin{align*}
\sum_{k=0}^{\infty} l( x_k^\ast,u_k^\ast(0) ) \leq \tilde{V}_{\text{MPC}}^{\infty}(x_0).
\end{align*}
This comparison, however, is in so far indirect as it does not involve the actual infinite sum of stage costs under the control actions generated through \eqref{eq:MPC-1} (with Assumption \ref{asm:local-ctrl-2}). 
Observe that this sum, by optimality, must be larger than $V^{\infty}$ from \eqref{eq:cost-opt-hor-inf} and thus from \eqref{eq:cost-rel-mod-decay-VN},
\vspace*{-0.5em}
\begin{align} \label{eq:comparison-MPC1-MPC2-direct}
\sum_{k=0}^{\infty} l( x_k^\ast,u_k^\ast(0) ) \leq \sum_{k=0}^{\infty} l( x_k^{\text{MPC}},u_k^{\text{MPC}}(0) ) + \delta_0 - \sum_{k=0}^{\infty} (\Delta l)_k,
\end{align} 
with $\{x_k^{\text{MPC}} \}_{k=0}^{\infty}$ and $\{u_k^{\text{MPC}}(0)\}_{k=0}^{\infty} \in \U_{\text{ad}} $ the state and control trajectory, respectively, under \eqref{eq:MPC-1}. 
Here, only limited information about the relationship between $V^{\infty}$ and the first term on the right-hand side of \eqref{eq:comparison-MPC1-MPC2-direct} is available though. 

Due to (in general) missing information about the solution to the IHOCP \eqref{eq:cost-opt-hor-inf}, no specific allocation rule for the coefficients in \eqref{eq:MPC-2} at $k=0$ can be given. 
Therefore, starting the scheme \eqref{eq:MPC-2} equivalently to MPC \eqref{eq:MPC-1} \ie $c_0(i)=1$, $i=0,\dotsc,N-1$, is reasonable. 
Yet, as elaborated in Proposition \ref{prop:c-online-increase}, the coefficients may tend to a higher value, which suggests an initialization $c_0(i)>1$.

\section{Simulation Study}
\label{sec:simstudy}

Next, the time-variant stage cost approach \eqref{eq:MPC-2} is compared to the standard MPC scheme \eqref{eq:MPC-1} using two test systems. 

First, a linear mass spring damper system 
\begin{align} \label{eq:sys-ex-1}
m\ddot{x}(t) + d \dot{x}(t) + s x(t) = u 
\end{align}
with specific $m=1\spc \text{Kg}$, $d=0.2\spc \frac{\text{Ns}}{\text{m}}$ and $s=1\spc \frac{\text{N}}{\text{m}}$ is used under Euler discretization with sampling time $\Delta t = 0.7$ (exact discretization is not used for consistency with the second example).
The stage cost and the horizon are set to $l(x,u) = \|x\|^2 + u^2$ and $N = 10$, respectively. 

Secondly, the nonlinear system from \cite{Primbs1999} 
\begin{align} \label{eq:sys-ex-2}
\footnotesize
\begin{split}
\dot{x}_1(t) = &\;x_2(t) \\
\dot{x}_2(t) = &-x_1(t) \left(\frac{\pi}{2} + \arctan(5x_1(t))  \right) - \frac{5 x_1^2(t)}{2(1+25x_1^2(t))}\\
&+ 4x_2(t) + 3u
\end{split}
\end{align}
is Euler discretized using $\Delta t = 0.1$. 
The stage cost and the horizon are set to $l(x,u) = x_2^2 + u^2$ and $N = 5$, respectively.

\begin{figure}[h]
	\centering
	\includegraphics[width=0.8\columnwidth]{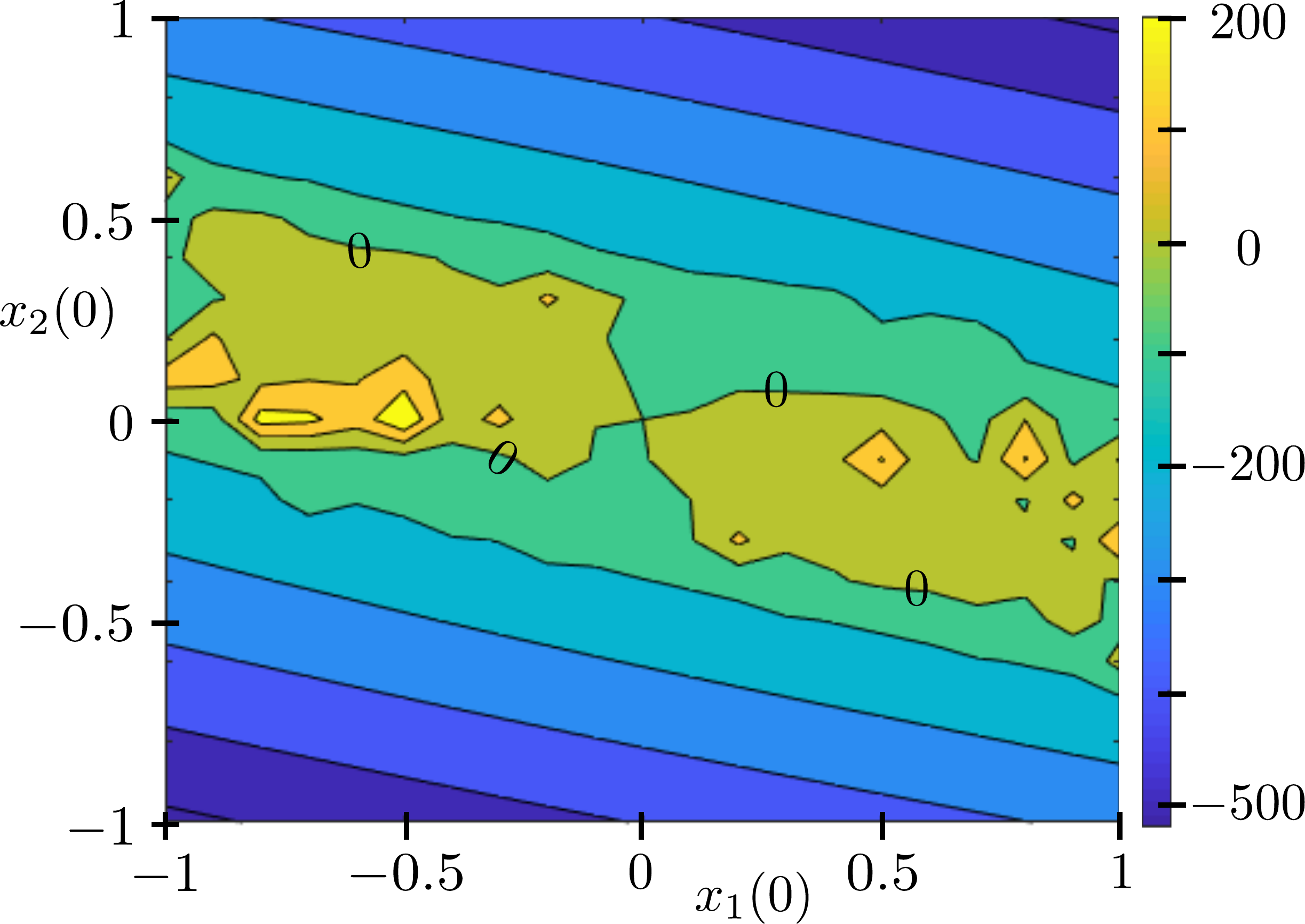}
	\vspace*{-0.5em}
	\caption{State space $[-1,1]^2$ of initial states $x_0$ for system \eqref{eq:sys-ex-1}. 
		For certain areas of initial states, the infinite-horizon cost under \eqref{eq:MPC-2} is lower than under \eqref{eq:MPC-1} (blue), whereas elsewhere it may be higher (yellow).
	}
	\label{fig:mass_spring_damp}
		\vspace*{-3mm}
\end{figure}

\begin{figure}[h]
	\centering
	\includegraphics[width=0.8\columnwidth]{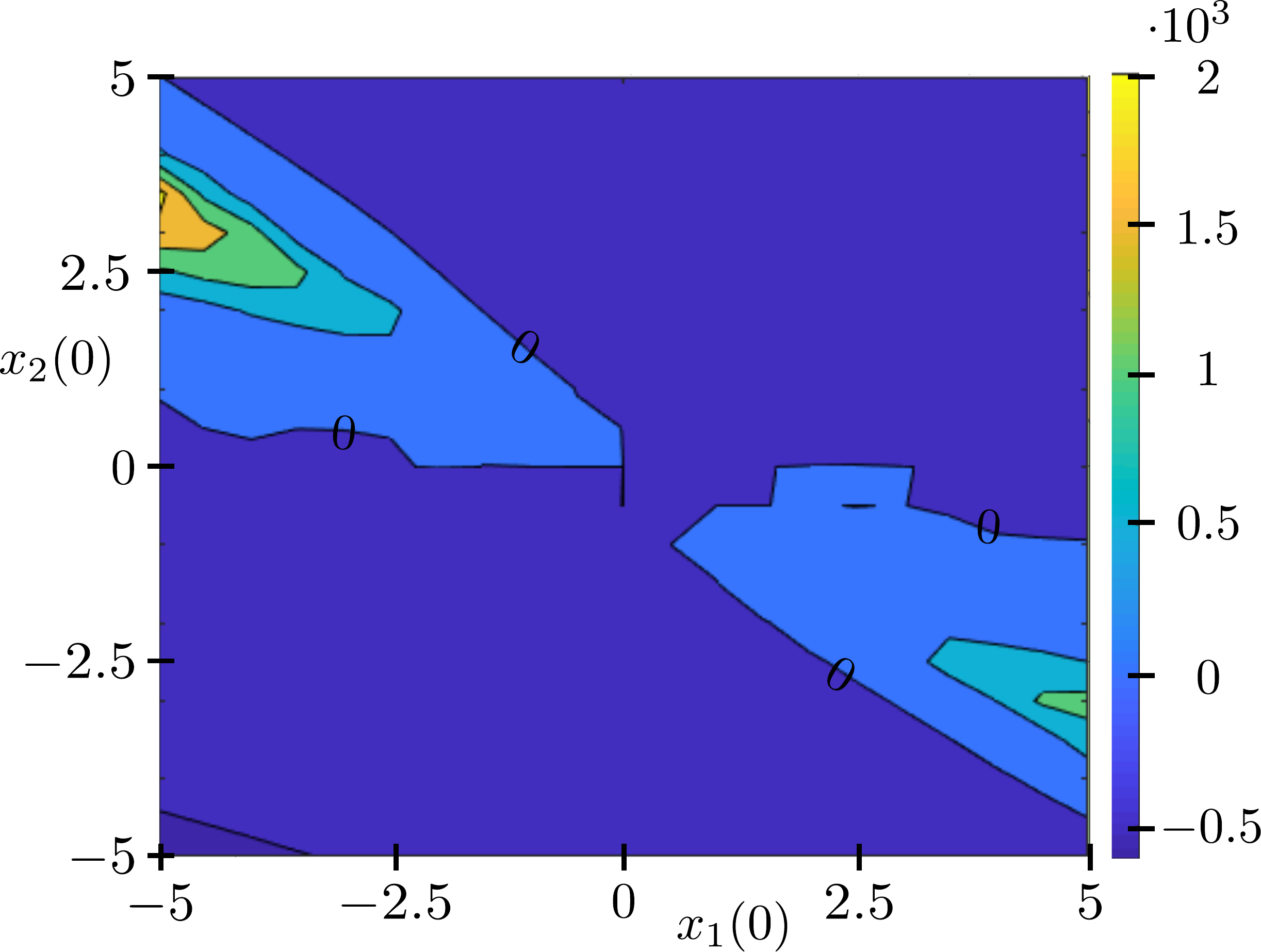}
		\vspace*{-0.5em}
	\caption{State space $[-5,5]^2$ of initial states $x_0$ for system \eqref{eq:sys-ex-2}. 
		In particular, some symmetry can be detected in the performance under either control scheme. 
		Whereas some state space segments mark better performance under MPC, adapting the coefficients appears to be beneficial for a larger partition of the space (dark blue). }
	\label{fig:primbs}
		\vspace*{-6mm}
\end{figure}

For linear and nonlinear system control, initial sets of coefficients $c_0(i)=5$, $i\in \{0,\dotsc,9\}$ and $c_0(i) = 20$, $i \in \{0,\dotsc,4\}$, are chosen assuming no further knowledge about the optimal value function. 
Under variation of the initial discrepancies $-\gamma_0+\delta_0$ in \eqref{eq:comparison-MPC1-MPC2} by using other initial coefficients, the obtained simulation results may be different. 
The terminal costs (and a corresponding controller) are obtained by finding a local LQ control and by using the optimal (continuous time) value function given in \cite{Primbs1999}, respectively, for the two systems. 
The terminal set was obtained using level sets $\X_f := \{x \in \X: \, F(x) \leq 0.1\}$.
Regarding \eqref{eq:comparison-MPC1-MPC2}, a comparison of MPC \eqref{eq:MPC-1} and the proposed scheme \eqref{eq:MPC-2} can be seen in Fig. \ref{fig:mass_spring_damp} as well as in Fig. \ref{fig:primbs} for either system. 
The scale on the right side of the contour depicts the difference between the (quasi-) infinite sum of stage costs under either algorithm: positive values denote superiority of MPC and negative values indicate better performance under \eqref{eq:MPC-2}, using the presented initial coefficients. 
Though the performances are dependent on a, roughly speaking, suitable initialization of the coefficients, they may also vary with the specific coefficient update rule used \eg either by optimization or allocation. 
In the simulation, the algorithm presented in Table \ref{tab:algorithm} is used. 
Additionally, for the nonlinear system, a lower bound on $c_k(i) \geq 1$, for all $i \in \{0,\dotsc,4\}$, was imposed on the optimization in \eqref{eq:critic-prob} according to Remark \ref{rem:lower-bound-opt}. 

It can be seen in the proposed approach may outperform MPC for certain initial states with the given configuration, whereas elsewhere MPC appears superior. 
	As pointed out in Sec. \ref{subsec:init}, the initial coefficients are, to this point, chosen arbitrarily, the performance under \eqref{eq:MPC-2} is not guaranteed to be better with respect to the IHOCP. 
	However, the results indicate the potential of cost reduction when the stage cost is weighted in the online optimization. 
	This may be of special interest for short predictive horizon problems, as then the computational load is within limits (also regarding the additional optimization of the coefficients). 


\section{Conclusion}

In this work, the rather general question of MPC performance with respect to a nonlinear IHOCP problem is addressed. 
In the framework of stabilizing MPC, specifically that using a stabilizing terminal set constraint and terminal cost, some properties of a modified finite-horizon cost function with corresponding decay are presented and put in relation with the solution to the IHOCP. 
It is argued that an approximation of the IHOCP by the proposed finite-horizon scheme can induce a higher decay rate which could subsequently reduce the ``degree of suboptimality''. 
Through introduction of coefficients to the standard MPC problem, this decay rate can be achieved via an additional optimization problem while specific allocation can also be used to circumvent online optimization and guarantee stability.

\bibliographystyle{plain}
\bibliography{bib/constructing-LFs,bib/discont-DE,bib/stabilization,bib/stability,bib/non-smooth-analysis,bib/sliding-mode,bib/MPC,bib/opt-ctrl,bib/dyn-sys,bib/ADP_RL}

\end{document}